\newcommand{\vecx}{{\boldsymbol{x}}}
\newcommand{\st}{\,:\,}
\newcommand{\norm}[1]{\left\Vert #1 \right\Vert}
\newcommand{\AN}{\mathcal{AN}}
\newcommand{\esspec}{\sigma_{ess}}
\theoremstyle{cupthm}
\newtheorem{thm}{Theorem}   
\theoremstyle{cupdefn}
\theoremstyle{cuprem}
\numberwithin{equation}{section}
\newcommand{\journalname}[1]{{\textit{#1}}}
\begin{document}
%
%
\runningtitle{Positive $\mathcal{AN}$ operators}
\title{A note on positive $\mathcal{AN}$ operators}

\author[1]{Ian Doust}
\address[1]{School of Mathematics and Statistics,
University of New South Wales,\
UNSW Sydney 2052 Australia \email{i.doust@unsw.edu.au}}%
\authorheadline{I. Doust}

%
%
\begin{abstract}
We show that positive absolutely norm attaining operators can be characterized by a simple property of their spectra. This result clarifies and simplifies a result of Ramesh. As an application we characterize weighted shift operators which are absolutely norm attaining.
\end{abstract}

\classification{Primary 47A10; secondary 47B10}%
\keywords{Absolutely norm attaining operators, weighted shift operators}
\maketitle

%
%


A bounded linear operator $T$ on a complex Hilbert space $H$ is said to be absolutely norm attaining if given any nonzero subspace $M$ of $H$, there exists $\vecx_0$ in the unit ball $M_1$ of $M$ such that $\norm{T\vecx_0} = \sup\{ \norm{T\vecx} \st \vecx \in M_1\}$. The set of all absolutely norm attaining operators, which we shall denote by $\AN$, was introduced by Carvajal and Neves \cite{CN}. As was shown by Pandey and Paulsen, there are severe restrictions on the structure of such operators.

\begin{thm}\label{thm1} \textrm{\cite[Theorem~5.1]{PP}} 
Suppose that $T$ is a positive operator on $H$. Then $T \in \AN$ if and only if $T = \alpha I + K + F$ where $\alpha\ge 0$, $K$ is a positive compact operator and $F$ is a self-adjoint finite-rank operator.
\end{thm}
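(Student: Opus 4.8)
The plan is to recast the statement as a spectral characterization and then treat the two implications separately. Writing $A = T - \alpha I$, the decomposition $T = \alpha I + K + F$ with $K \ge 0$ compact and $F = F^*$ finite rank is equivalent, via the spectral theorem for compact self-adjoint operators, to two spectral conditions: (i) $\esspec(T) = \{\alpha\}$, so that $A$ is compact self-adjoint; and (ii) the spectral projection $E_{[0,\alpha)}(T)$ has finite rank, so that the negative part of $A$ is finite rank, whence $F = -A_-$ and $K = A_+$. Indeed, adding a rank-$r$ self-adjoint perturbation to a positive operator creates at most $r$ negative eigenvalues, so the decomposition forces (ii), and conversely (i) and (ii) reconstruct it. Thus it suffices to prove that for positive $T$ one has $T \in \AN$ if and only if (i) and (ii) hold. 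The finite-dimensional case is trivial ($\esspec(T)=\emptyset$, $F=T$), so assume $H$ is infinite-dimensional and put $\alpha = \min\esspec(T)$.

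For sufficiency I fix a nonzero closed subspace $M$ and must attain $s := \sup\{\norm{T\vecx} \st \vecx \in M_1\}$. Since $\norm{T\vecx}^2 = \langle T^2\vecx,\vecx\rangle$, this supremum equals $\norm{S}^{1/2}$, where $S = P_M T^2 P_M|_M$ is the compression of $T^2$ to $M$, a positive operator; attainment of $s$ is exactly the statement that $\norm{S} = s^2$ is an eigenvalue of $S$. By (i), $T^2 = \alpha^2 I + (\text{compact self-adjoint})$, so $S = \alpha^2 I_M + C$ with $C$ compact self-adjoint on $M$; hence $\esspec(S) = \{\alpha^2\}$ and $s^2 = \norm{S} \ge \alpha^2$. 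If $s^2 > \alpha^2$ then $s^2$ is an isolated point of $\sigma(S)$, hence an eigenvalue of finite multiplicity, and $s$ is attained. If $s^2 = \alpha^2$ then $\langle T^2\vecx,\vecx\rangle \le \alpha^2\norm{\vecx}^2$ on $M$, whereas on the subspace $H_+ := E_{[\alpha,\infty)}(T)H$ the reverse inequality holds; since $M$ is infinite-dimensional and $H_+$ has finite codimension by (ii), any nonzero $\vecx_0 \in M \cap H_+$ satisfies $\norm{T\vecx_0} = \alpha\norm{\vecx_0} = s\norm{\vecx_0}$, and $s$ is attained.

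For necessity I argue by contradiction, in each case exhibiting a subspace on which the supremum is approached but not attained. Condition (ii) is the easy one: if $E_{[0,\alpha)}(T)$ had infinite rank then, since the only possible accumulation point of $\sigma(T)\cap[0,\alpha]$ is $\alpha = \min\esspec(T)$, there are orthonormal eigenvectors $\vecp_n$ with eigenvalues $\lambda_n \to \alpha$ and $\lambda_n < \alpha$; on $M = \overline{\operatorname{span}}\{\vecp_n\}$ the operator is diagonal with $\norm{T|_M} = \sup_n \lambda_n = \alpha$, yet $\sum \lambda_n^2|c_n|^2 < \alpha^2\sum|c_n|^2$ for every nonzero $\sum c_n \vecp_n$, so the norm is not attained.

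For (i), suppose $\esspec(T)$ is not a single point. Restricting $T$ to the reducing subspace $E_{[0,\beta]}(T)H$ (a restriction to a reducing subspace is again $\AN$), I may assume $\beta := \max\esspec(T) = \norm{T}$ and $\alpha < \beta$. Choose mutually orthogonal orthonormal approximate eigenvectors $\vecu_n,\vecv_n$ with $\norm{(T-\beta)\vecu_n} \le \eta_n$ and $\norm{(T-\alpha)\vecv_n} \le \eta_n$, and set $\vecw_n = \sqrt{1-t_n}\,\vecu_n + \sqrt{t_n}\,\vecv_n$ with $t_n \downarrow 0$; then $\norm{T\vecw_n}^2 = \beta^2 - t_n(\beta^2 - \alpha^2) + O(\eta_n)$, so on $M = \overline{\operatorname{span}}\{\vecw_n\}$ the supremum equals $\norm{T} = \beta$ but is reached only in the limit. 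Since $\beta = \norm{T}$, any attaining unit vector would lie in $\ker(T-\beta)$, and evaluating $\langle(\beta I - T)\vecw_n,\vecw_m\rangle$ (diagonal $\approx t_n(\beta-\alpha)$, off-diagonal $\approx 0$) shows $M \cap \ker(T-\beta) = \{0\}$ once the $\eta_n$ are small enough, a contradiction. This controlled-error tilting construction is the main obstacle: because genuine essential spectrum supplies only approximate eigenvectors, the non-attainment estimate is delicate, and the rates $t_n$ and $\eta_n$ must be coordinated so that the supremum stays equal to $\beta$ while the cross terms remain negligible.
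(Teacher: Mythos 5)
The paper gives no proof of this statement at all---it is imported verbatim from Pandey and Paulsen \cite[Theorem~5.1]{PP} and then used as a black box to prove Theorem~\ref{thm2}---so your attempt must be judged on its own merits. Your architecture is sensible and in effect proves Theorem~\ref{thm2} directly: the translation between the decomposition $T=\alpha I+K+F$ and your conditions (i)--(ii) is correct (a rank-$r$ self-adjoint perturbation of a positive operator does create at most $r$ negative eigenvalues), your sufficiency argument via the compression $S=P_MT^2P_M|_M$ is correct (modulo stating that finite-dimensional $M$ is trivial by compactness), and your necessity argument for (ii) is correct. The genuine gap is the first step of the necessity of (i): the reduction ``restricting to $E_{[0,\beta]}(T)H$, I may assume $\beta:=\max\esspec(T)=\norm{T}$'' is false, because that restriction can destroy the hypothesis that $\esspec(T)$ is not a singleton. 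Concretely, take $T=I\oplus D$ on $\ell^2\oplus\ell^2$, where $D$ is diagonal with entries $2+1/n$. Then $\esspec(T)=\{1,2\}$, so this $T$ is precisely the kind of operator your argument must expose as non-$\AN$; but $E_{[0,2]}(T)H$ is the first summand, on which $T$ restricts to the identity---essential spectrum $\{1\}$, and certainly in $\AN$---so no contradiction can be extracted from the restricted operator. The source of the trouble: $\beta=\max\esspec(T)$ may belong to the essential spectrum only because finite-multiplicity eigenvalues accumulate at it \emph{from above}; then $E_{(\beta-\eta,\beta]}(T)$ has finite rank for some $\eta>0$, the approximate eigenvectors $\vecu_n$ at level $\beta$ that you need simply do not exist inside $E_{[0,\beta]}(T)H$, and the positivity $\beta I-T\ge0$ on which your non-attainment estimate rests is exactly what the discarded part of the operator violates.

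The missing case must be handled with the part you threw away: there $\sigma(T)\cap(\beta,\infty)$ consists of infinitely many isolated finite-multiplicity eigenvalues $\mu_n\downarrow\beta$, and one should tilt \emph{genuine} eigenvectors $T\vecu_n=\mu_n\vecu_n$ toward approximate eigenvectors $\vecv_n$ at level $\alpha$, choosing $t_n\in(0,1)$ so that $(1-t_n)\mu_n^2+t_n\alpha^2=\beta^2-\epsilon_n$ with $\epsilon_n\downarrow0$, i.e.\ approaching the target value $\beta$ from above rather than from below; the same almost-orthogonality bookkeeping then shows the supremum over $M_1$ equals $\beta$ and is not attained. Two further remarks. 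First, your closing step (``diagonal $\approx t_n(\beta-\alpha)$, off-diagonal $\approx0$, hence $M\cap\ker(T-\beta)=\{0\}$'') is completable but hides real work: the off-diagonal terms must be made \emph{summably} small relative to the diagonal ones (for instance $\eta_n=4^{-n}t_n$, followed by a Gram-matrix or diagonal-dominance argument), since a coefficient sequence concentrated at large $n$ defeats any bound in which the errors are merely $o(t_n)$ termwise. Second, be aware that the paper derives Theorem~\ref{thm2} from Theorem~\ref{thm1}, while you are deriving Theorem~\ref{thm1} from (in effect) Theorem~\ref{thm2}; the logical direction is opposite, so nothing in the paper can be borrowed to close your gap without circularity.
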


In \cite{R}, Ramesh proposes a different characterization of positive $\AN$ operators. Unfortunately Theorem~2.4 of \cite{R} is misstated, and is perhaps more complicated than it needs to be. The main point of Theorem~\ref{thm2} below is that one only needs to check two elementary properties of the spectrum of an operator to ensure that it is of the form described in Theorem~\ref{thm1}.  

Let $\mathcal{C}(H) = B(H)/K(H)$ denote the Calkin algebra with quotient map $\pi$, and recall that 
the essential spectrum of an operator $T$, denoted $\esspec(T)$, is the spectrum of $\pi(T)$ in $C(H)$.

\begin{thm}\label{thm2}
Suppose that $T$ is a positive operator on an infinite dimensional Hilbert space $H$. Then $T \in \AN$ if and only if $\esspec(T)$ contains a single point $\alpha$ and $\sigma(T)$ contains only finitely many elements less than $\alpha$.
\end{thm}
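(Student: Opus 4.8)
The plan is to reduce Theorem~\ref{thm2} to the Pandey–Paulsen characterization (Theorem~\ref{thm1}) by translating the structural decomposition $T = \alpha I + K + F$ into spectral language and back. Since $T$ is positive, $\sigma(T)$ is a compact subset of $[0,\infty)$, and the whole argument can be run through the spectral theorem together with standard facts about the essential spectrum.

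For the forward direction, I would assume $T \in \AN$ and invoke Theorem~\ref{thm1} to write $T = \alpha I + K + F$ with $\alpha \ge 0$, $K$ positive compact, and $F$ self-adjoint finite-rank. The key observation is that $K + F$ is a compact operator, so $\pi(T) = \alpha \pi(I)$ in the Calkin algebra, whence $\esspec(T) = \{\alpha\}$, a single point. For the second condition, the self-adjoint compact perturbation $K + F$ of $\alpha I$ can only move finitely many spectral points below $\alpha$: the eigenvalues of $K+F$ accumulate only at $0$, so only finitely many of them are bounded away from $0$ in the negative direction, and these are exactly the eigenvalues of $T$ that can lie below $\alpha$. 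Making this precise, I would use that $\sigma(T) \setminus \{\alpha\}$ consists of isolated eigenvalues of finite multiplicity (since they lie outside $\esspec(T)$), and that such eigenvalues can accumulate only at $\alpha$; the ones strictly less than $\alpha$ therefore cannot accumulate anywhere in $[0,\alpha)$, so there are only finitely many.

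For the converse, I would suppose $\esspec(T) = \{\alpha\}$ and that $\sigma(T) \cap [0,\alpha)$ is finite. Every point of $\sigma(T) \setminus \esspec(T)$ lies in the discrete spectrum, i.e.\ is an isolated eigenvalue of finite multiplicity, and such eigenvalues can accumulate only on $\esspec(T) = \{\alpha\}$. Thus $\sigma(T)$ consists of $\alpha$, finitely many eigenvalues below $\alpha$, and a sequence of finite-multiplicity eigenvalues above $\alpha$ accumulating (if at all) only at $\alpha$. Using the spectral theorem I would build the decomposition explicitly: let $F$ account for the finitely many eigenvalues below $\alpha$ (a self-adjoint finite-rank operator), and let $K = T - \alpha I - F$. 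By construction $K$ is self-adjoint with spectrum contained in $[0,\infty)$ together with $0$ as the only accumulation point, so $K$ is a positive compact operator, and $T = \alpha I + K + F$ has the form required by Theorem~\ref{thm1}, giving $T \in \AN$.

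The main obstacle is the bookkeeping in the converse: one must check that after subtracting off the finitely many sub-$\alpha$ eigenvalues, the remainder $K = T - \alpha I - F$ is genuinely \emph{positive} and not merely compact and self-adjoint. This requires verifying that the part of $\sigma(T)$ at or above $\alpha$ translates, after the shift by $-\alpha$, into nonnegative spectrum for $K$, and that the finite-rank correction $F$ does not inadvertently introduce negative spectrum into $K$ on the relevant spectral subspaces. I expect to handle this cleanly by working with the orthogonal decomposition of $H$ into the finite-dimensional span of the sub-$\alpha$ eigenspaces and its complement, defining $F$ so that on the complement $K$ coincides with $T - \alpha I \ge 0$, and checking positivity of $K$ directly on the finite-dimensional piece.
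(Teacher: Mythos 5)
The forward direction of your proposal contains a genuine gap. After writing $T=\alpha I+K+F$ via Theorem~\ref{thm1}, your proof that $\esspec(T)=\{\alpha\}$ is correct, but your argument for the second condition is a non sequitur: from the (true) facts that $\sigma(T)\setminus\{\alpha\}$ consists of isolated, finite-multiplicity eigenvalues and that these can accumulate only at $\alpha$, you conclude that the eigenvalues in $[0,\alpha)$ ``cannot accumulate anywhere in $[0,\alpha)$, so there are only finitely many.'' That inference fails, because such eigenvalues can accumulate at $\alpha$ \emph{from below}. In fact your argument uses nothing about $T$ beyond positivity and $\esspec(T)=\{\alpha\}$, and those hypotheses do not imply the conclusion: take $T=\alpha I-K_0$ with $K_0$ positive compact of infinite rank and $\norm{K_0}\le\alpha$. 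This $T$ is positive, $\esspec(T)=\{\alpha\}$, yet $\sigma(T)$ contains infinitely many points below $\alpha$ (accumulating at $\alpha$); it is exactly the kind of operator the theorem must exclude from $\AN$. What is missing is any use of the positivity of $K$ and the finite rank of $F$. One correct route: let $V$ be the span of the eigenvectors of the compact operator $K+F$ with negative eigenvalues; if $\dim V>\operatorname{rank}F$ then there is a nonzero $x\in V\cap\ker F$, and then $0>\langle (K+F)x,x\rangle=\langle Kx,x\rangle\ge 0$, a contradiction. Hence $K+F$ has at most $\operatorname{rank}F$ negative eigenvalues, which is what bounds the number of points of $\sigma(T)$ below $\alpha$. (The paper avoids writing this out by deferring to the proof of Theorem~\ref{thm1} and to \cite[Theorem~3.25]{CN}.)

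Your converse is essentially correct and runs parallel to the paper's, only in the opposite order: the paper first obtains compactness of $T-\alpha I$ outright — $\pi(T-\alpha I)$ is a quasinilpotent self-adjoint element of the Calkin algebra, hence zero — and then splits the resulting compact operator by its spectral decomposition into a positive compact part and a finite-rank part, whereas you split off $F$ first and deduce compactness of $K=T-\alpha I-F$ afterwards. Be careful with your justification at that point: ``self-adjoint with spectrum in $[0,\infty)$ accumulating only at $0$'' does not by itself imply compactness (an infinite-rank orthogonal projection satisfies it); you must also invoke the finite multiplicity of the eigenvalues above $\alpha$, which your setup does provide, or simply use the Calkin-algebra argument as the paper does.
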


\begin{proof}
The forward direction is a direct consequence of standard results about the invariance of essential spectrum under compact perturbations (see for example, \cite[Section~XI.4]{C}), and the proof of Theorem~\ref{thm1} (or Theorem~3.25 of \cite{CN}).

Conversely, suppose that $T$ is a positive operator and that $\esspec(T) = \{\alpha\}$. This implies that $\pi(T-\alpha I)$ is a quasinilpotent self-adjoint element of $C(H)$ and hence is zero. That is $T = \alpha I + K$ where $K$ is a compact self-adjoint operator. The spectral theorem for such operators says that $K = \sum_{n \in N} \lambda_n P_n$ where $N$ is a countable set and $P_n$ is the orthogonal  finite rank projection onto the eigenspace for the eigenvalue $\lambda_n$.


Let $N^{-} = \{n \st \lambda_n < 0\}$ and $N^+ = \{n \st \lambda_n \ge 0\}$. Since $\sigma(T) = \{\alpha\} \cup \{\alpha + \lambda_n \st n \in N\}$, if $\sigma(T)$ contains only finitely many elements less than $\alpha$ then $N^{-}$ is a finite set and hence $F = \sum_{n \in N^{-}} \lambda_n P_n$ is self-adjoint and finite rank (with of course the convention that an empty sum is zero). The operator $K^+ = \sum_{n \in N^{+}} \lambda_n P_n$ is compact and positive. Since $T = \alpha I + K^+ +F$ we can apply Theorem~\ref{thm1} to deduce that $T \in \AN$.
\end{proof}

Pandey and Paulsen noted \cite[Lemma~6.2]{PP} that $T \in \AN$ if and only if $|T| = (T^*T)^{1/2} \in \AN$ so one may write a corresponding characterization of general $\AN$ operators in terms of the spectral properties of $|T|$. 

As a simple application, it follows that a (bounded) weighted shift operator on $\ell^2$
  \[ T(x_1,x_2,x_3,\dots) = (0,w_1x_1,w_2 x_2, \dots)\]
is absolutely norm attaining if and only if either
\begin{enumerate}
 \item[(i)] the set $\bigl\{\,|w_n|\,\bigr\}_{n=1}^\infty$ has a unique limit point $\alpha$;
 \item[(ii)] for all $\beta \ne \alpha$, $|w_n| = \beta$ for only finitely many values of $n$; and
 \item[(iii)] $|w_n| < \alpha$ for only finitely many values of $n$,
\end{enumerate}
or 
\begin{enumerate}
 \item[(i')] $\sigma(|T|) = \bigl\{\,|w_n|\,\bigr\}_{n=1}^\infty$ is a finite set; and
 \item[(ii')] there is only one value of  $\beta \in \sigma(|T|)$ such that $|w_n| = \beta$ for infinitely many values of $n$.
\end{enumerate}
Some related results can be found in \cite{L}

%
%

\end{document}